\newcommand{\cx}{{\mathbb C}}
\newcommand{\Res}{\operatorname{Res}}
\newcommand{\Ker}{\operatorname{Ker}}
\newcommand{\Rat}{\operatorname{Rat}}
\newcommand{\ol}{\overline}
\numberwithin{equation}{section}
\newtheorem{theorem}{Theorem}[section]
\newtheorem{lemma}[theorem]{Lemma}
\newtheorem{proposition}[theorem]{Proposition}
\newcounter{mtheorem}
\newtheorem{mtheorem}[mtheorem]{Theorem}
\theoremstyle{remark}
\newtheorem{remark}[theorem]{Remark}
\newtheorem{definition}[theorem]{Definition}
\newtheorem{example}[theorem]{Example}
\newcommand{\oN}{{\mathbb{N}}}
\newcommand{\oP}{{\mathbb{P}}}
\newcommand{\oR}{{\mathbb{R}}}
\newcommand{\oZ}{{\mathbb{Z}}}
\newcommand{\sF}{{\mathcal{F}}}
\newcommand{\sN}{{\mathcal{N}}}
\newcommand{\sO}{{\mathcal{O}}}
\begin{document}

\title{Hyperk\"ahler geometry of rational curves in twistor spaces}
\author{Roger Bielawski \and Naizhen Zhang}
\address{Institut f\"ur Differentialgeometrie,
Leibniz Universit\"at Hannover,
Welfengarten 1, 30167 Hannover, Germany}



\begin{abstract} We investigate the pseudo-hyperk\"ahler geometry of higher degree rational curves in the twistor space of a  hyperk\"ahler $4$-manifold. \end{abstract}

\maketitle

\thispagestyle{empty}


The twistor space of a hypercomplex or a hyperk\"ahler manifold $M$ is a complex manifold $Z$ equipped with a holomorphic submersion $\pi:Z\to \oP^1$ and an antiholomorphic involution $\sigma$ covering the antipodal map. The manifold $M$ is then recovered as (a component of) the Kodaira moduli space of $\sigma$-invariant sections of $\pi$ with normal bundle splitting as $\bigoplus\sO(1)$.
In \cite{Sigma} the first author observed that, if $\dim M=4$, then we also obtain a hypercomplex or pseudo-hyperk\"ahler structure on a subset of the Douady space consisting of $\sigma$-invariant curves of degree $d$, $d>1$, which are  cohomologically stable, i.e.\  satisfy $h^1(N_{C/Z}(-2))=0$.   
In \cite{BP2} C. Peternell and the first author showed that in the case of curves of genus $0$ in $\oP^3\backslash \oP^1$ (i.e.\ in the twistor space of the flat $\oR^4$) this pseudo-hyperk\"ahler structure can be obtained as a pseudo-hyperk\"ahler quotient of a flat space by a non-reductive Lie group. Even  in that case, however, we had been unable to determine the signature of the metric for $d>3$.
\par
In this work we investigate the pseudo-hyperk\"ahler geometry of higher degree $\oP^1$'s embedded  in the twistor space of an arbitrary $4$-dimensional hyperk\"ahler manifold. First of all, if such a $\oP^1$ of degree $d$ is to satisfy reality conditions, then $d$ must be odd. This has been proved in \cite[Prop.\ 5.9]{BP2}, but it also follows from the observation that a rational map $\phi:\oP^1\to\oP^1$ of degree $d$ can commute with the antipodal map only for odd $d$. With this restriction, let us denote by $M_d$ the subset of the Douady space consisting of $\sigma$-invariant cohomologically stable $\oP^1\subset Z$ of degree $d$. $M_d$ is hypercomplex, resp.\ pseudo-hyperk\"ahler, if $M$ is hypercomplex, resp. hyperk\"ahler. We remark that in this situation ``cohomologically stable" is equivalent to $N_{C/Z}\simeq \sO_{\oP^1}(2d-1)\oplus  \sO_{\oP^1}(2d-1)$.
Our main result is:
\begin{mtheorem} Let $M$ be a $4$-dimensional hyperk\"ahler  manifold. Suppose that $d\in \oN$ is odd and $M_d$ is nonempty. Then:
\begin{itemize}
\item[(i)] the signature of the pseudo-hyperk\"ahler metric on $M_d$ is $(2d+2,2d-2)$;
\item[(ii)] there exists a natural submersion $\rho: M_d\to \oR\oP^{2d-2}$ and an open dense subset $U$ of $M_d$ such
each fibre of $\rho|_U$ has a natural $d$-hypercomplex\footnote{A definition of a $d$-hypercomplex manifold may be found in \S\ref{realm}.} structure.
\end{itemize}
\end{mtheorem}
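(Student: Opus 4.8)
The plan is to work at a fixed cohomologically stable $\sigma$-invariant curve $C\subset Z$ of odd degree $d$, where $N_{C/Z}\simeq\sO_{\oP^1}(2d-1)^{\oplus 2}$, and to read off the pseudo-hyperk\"ahler structure near $C$ from natural data on $C$ as in \cite{Sigma}: the tangent space is $T_CM_d=H^0(C,N_{C/Z})^\sigma$, a real vector space of dimension $4d$, and the metric together with the three symplectic forms comes from the restriction to $C$ of the fibrewise symplectic form $\omega\in H^0(Z,\Lambda^2T^*_{Z/\oP^1}\otimes\pi^*\sO(2))$ and the real structure. The algebro-geometric input I need is the exact sequence obtained by combining $0\to T_{Z/\oP^1}\to T_Z\to\pi^*T_{\oP^1}\to 0$ with $0\to T_C\to T_Z|_C\to N_{C/Z}\to 0$: the composite $T_C\to\pi^*T_{\oP^1}|_C$ is $d(\pi|_C)$, an injective morphism of line bundles whose cokernel $\sQ$ is a torsion sheaf of length $\deg R=2d-2$ on the ramification divisor $R$ of $\pi|_C$, and a diagram chase produces
\[
0\longrightarrow T_{Z/\oP^1}|_C\longrightarrow N_{C/Z}\longrightarrow \sQ\longrightarrow 0 .
\]
Since $\omega$ yields $\Lambda^2T_{Z/\oP^1}|_C\simeq\sO_C(2d)$ and the inclusion above has torsion cokernel, $T_{Z/\oP^1}|_C\simeq\sO_C(a)\oplus\sO_C(b)$ with $1\le a\le b\le 2d-1$, so $h^1(T_{Z/\oP^1}|_C)=0$ and $h^0(T_{Z/\oP^1}|_C)=2d+2$; also $h^1(N_{C/Z})=0$.

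For (i), I would pass to cohomology and then to $\sigma$-fixed points — all the sheaves carry conjugation-type real structures over $\sigma$, compatible with the maps — obtaining a short exact sequence of real vector spaces
\[
0\longrightarrow H^0(T_{Z/\oP^1}|_C)^\sigma\longrightarrow T_CM_d\longrightarrow H^0(\sQ)^\sigma\longrightarrow 0
\]
of dimensions $(2d+2)+(2d-2)=4d$; note both $2d\pm2$ are divisible by $4$ exactly because $d$ is odd, as a pseudo-hyperk\"ahler signature must be. It then remains to prove two sign statements. The first is that $g$ is positive definite on $H^0(T_{Z/\oP^1}|_C)^\sigma$, the $(2d+2)$-dimensional space of deformations of $C$ that stay tangent to the fibres of $\pi$: for such $s$ the positivity of $g(s,s)$ reduces, via the restriction of $\omega$ to the fibres of $\pi$ and the standard reality argument of the twistor construction, to the positivity of the hyperk\"ahler metric on each fibre surface $Z_\zeta$, which holds since $Z_\zeta$ is Euclidean. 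Given this, $H^0(T_{Z/\oP^1}|_C)^\sigma$ is $g$-nondegenerate, and the second, harder statement is that $g$ is negative definite on its $(2d-2)$-dimensional orthogonal complement, which maps isomorphically onto $H^0(\sQ)^\sigma$.

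This second sign is the main obstacle, since the ``ramification deformations'' are not seen by $\omega$ directly. I would pursue it either locally — computing $g(s,s)$ for $s$ supported near a simple branch point in a chart where $\pi|_C$ reads $w\mapsto w^2$, the contribution being a residue at $R$ whose sign is reversed by the square-root monodromy exchanging the two sheets — or, more robustly, by reduction to the flat model: by \cite{BP2} the metric on $M_d$ for $M=\oR^4$ is a pseudo-hyperk\"ahler quotient of a flat space by a non-reductive group, so its signature is a finite-dimensional linear-algebra computation, and since the signature is a deformation invariant of the everywhere non-degenerate metric while the germ of $(Z,\pi,\sigma)$ along any cohomologically stable $C$ with $N_{C/Z}\simeq\sO_C(2d-1)^{\oplus 2}$ can be joined to the flat germ through cohomologically stable curves, the signature at $C$ equals the value computed there, namely $(2d+2,2d-2)$.

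For (ii), I would let $\rho(C)$ be the branch divisor of $\pi|_C\colon C\to\oP^1$, a $\sigma$-invariant effective divisor of degree $2d-2$, i.e.\ a point of the space $\oP^{2d-2}$ of effective divisors of degree $2d-2$ on $\oP^1$; the induced real structure is of conjugation type, so its real locus is $\oR\oP^{2d-2}$ (the projectivization of the space of $\sigma$-invariant binary $(2d-2)$-forms). The derivative $d\rho_C$ is the composite of the surjection $T_CM_d\twoheadrightarrow H^0(\sQ)^\sigma$ above with the map $H^0(\sQ)^\sigma\to T_{\rho(C)}\oR\oP^{2d-2}$ that sends an infinitesimal deformation of $\sQ$ to the resulting motion of the branch points; the latter is an isomorphism when the branch divisor is reduced, so $\rho$ is a submersion over the open dense set $U$ of curves with reduced branch divisor. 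On a fibre $\rho^{-1}(B)$ every curve is a degree-$d$ cover of $\oP^1$ branched exactly over $B$, \'etale over $\oP^1\setminus B$; each of its $d$ local sections transports the complex structure $I_\zeta$ of $M$ to a local complex structure on $\rho^{-1}(B)$, and letting $\zeta$ and the sheet vary gives an $\sO(d)$-twisted $d$-valued family of integrable complex structures. The plan is to check that this family is a $d$-hypercomplex structure in the sense of \S\ref{realm}: integrability should follow from the complex-analyticity of the Douady space (the fibres of $\rho$ are complex submanifolds for each member of the family), while the global consistency of the $d$ branches is guaranteed precisely because $B$ is held fixed along $\rho^{-1}(B)$, so the monodromy permuting the sheets is globally defined. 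Matching this twisted family against the axioms of \S\ref{realm} — in particular its behaviour across the branch divisor — is the technical core of part (ii).
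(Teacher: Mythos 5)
Your part (i) is where the serious gap lies, and your strategy there is genuinely different from the paper's. The paper does not decompose $T_CM_d$ into vertical and branch directions at all: it writes the metric explicitly as the contour/residue formula \eqref{hermitian} in terms of the polynomial $q$ defining $\pi|_C=p/q$, notes that the form is nondegenerate for every admissible $q$, and concludes by continuity that it suffices to compute the signature at $q(t)=t^d$, where the form diagonalizes as $2\sum_{i}(-1)^{d-1-i}\bigl(|f_i|^2+|g_i|^2\bigr)$ with signature $(2d+2,2d-2)$. Your two sign claims are both unsubstantiated, and the arguments you offer for them do not work. For positivity of $g$ on $H^0({T_F}|_C)^\sigma$, the proposed reduction ``to the positivity of the hyperk\"ahler metric on each fibre $Z_\zeta$'' proves too much: \emph{every} tangent vector to $M_d$, not just the vertical ones, is assembled from motions of the $d$ points of $C\cap\pi^{-1}(\zeta)$ inside the positive-definite fibres $Z_\zeta$, yet the total metric is indefinite; the signs come entirely from the pairing against the extension class $\lambda\in H^1(C,\sO_C(-2))$ (equivalently, from the residues at the zeros of $q$ in \eqref{hermitian}), which your reduction ignores. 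For negativity on the complement, the appeal to \cite{BP2} is circular: the introduction of the present paper states that even in the flat case of \cite{BP2} the signature could not be determined for $d>3$, so the ``finite-dimensional linear-algebra computation'' you invoke is precisely the open problem, not a known input; and the connectivity of the space of germs needed for your deformation argument is itself unestablished. Note also that positivity on a $(2d+2)$-dimensional subspace alone only gives positive index $\ge 2d+2$; the negative-definiteness of the complement is not a formal consequence and genuinely requires exhibiting a $(2d-2)$-dimensional negative subspace.

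For part (ii), your definition of $\rho$ and the submersion argument via the sequence \eqref{NtoF} coincide with the paper's, but two points are off. The paper proves $d\rho_C$ is surjective wherever $h^1({T_F}|_C)=0$ --- which holds on all of $M_d$ --- using the identification $H^0(C,\sF)\simeq T_{B_C}\oP^{2d-2}$ coming from the sequence \eqref{sF2} and Hurwitz theory, valid without assuming the branch divisor reduced; your restriction to reduced $B_C$ is unnecessary and yields a weaker statement, and the subset $U$ in the theorem is the locus where ${T_F}|_C\simeq\sO_{\oP^1}(d)\oplus\sO_{\oP^1}(d)$, not where $B_C$ is reduced. More importantly, the $d$-hypercomplex structure on the fibres is exactly the part you leave as ``the technical core'': the paper obtains it with no further work by identifying the component $M_\phi$ of $\rho^{-1}(B)$ with the real sections of the fibred product $\phi^\ast Z\to\oP^1$ whose normal bundle is ${T_F}|_C\simeq\sO(d)\oplus\sO(d)$, and then invoking \cite{BiTAMS}; your sheet-by-sheet transport of complex structures would still have to be matched to Definition \ref{d-hcx} (the isomorphism $T^\cx M\simeq E\otimes\cx^{d+1}$ and the involutivity of the distributions $E\otimes K_\zeta$), which is precisely what the pullback construction delivers for free.
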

Part (ii) holds also if $M$ is only hypercomplex. The subset $U$ consists of $C$ such that the restricted vertical tangent bundle $(\Ker d\pi)|_C$ is isomorphic to $ \sO_{\oP^1}(d)\oplus \sO_{\oP^1}(d)$.
\par
The structure of the paper is as follows. In the next section we recall facts about the geometry of degree $d$ curves (of arbitrary arithmetic genus) in $Z$. We also interpret the $\sO_{\oP^1}(2)$-valued symplectic form on the fibres of 
the twistor space of such curves directly in terms of the normal bundles of curves (as long as they are local complete intersections). In \S 2 we define and study the map $\rho$ without any reality assumptions. These are imposed in \S 3, where we prove Theorem A. Finally, we discuss in detail the case of degree $d$ $\oP^1$'s embedded in the twistor space of an ALE or ALF gravitational instanton of type $A_k$. In the ALE case we can actually view $M_d$ as an open subset of the real locus of the Hilbert scheme of degree $d$ rational curves on a singular Fano $3$-fold - a hypersurface in a weighted projective $4$-space (cf.\  Remark \ref{Fano}).

                                                                                                                                                                                                                                                               \vspace{1mm}
                                                                                                                                                                                                                                                                
{\em Acknowledgement.}  This work has been carried out while both authors were members of, and the second author was fully funded by the DFG Priority Programme 2026 ``Geometry at infinity", the support of which is gratefully acknowledged.

\section{Geometry of Douady spaces of curves in a twistor space\label{Douady}}

Let $Z$ be a complex $3$-dimensional manifold with a holomorphic submersion $\pi:Z\to  \oP^1$. We write $\sO_Z(i)$ for $\pi^\ast \sO_{\oP^1}(i)$ and $\sF(i)$ for $\sF\otimes\sO_Z(i)$ for any sheaf $\sF$ on $Z$. We denote by $T_F$ the vertical tangent bundle $\Ker d\pi$  of $Z$. From the exact sequence
$$0\longrightarrow T_F\longrightarrow TZ \longrightarrow \pi^\ast T\oP^1\longrightarrow 0, $$
we conclude that $K_Z\simeq \Lambda^2 T^\ast_F(-2)$. In particular, an $\sO(2)$-valued symplectic form $\omega$ along the fibres of $\pi$, i.e.\ a trivialisation of $\Lambda^2 T^\ast_F(2)$, can be viewed as a nowhere  vanishing section of $K_Z(4)$.
\par
We now consider the subset $X_{d}$  of the Douady space of $1$-dimensional compact subspaces of $Z$ consisting of subschemes $C$ such that
 $\pi|_C:C\to \oP^1$ is flat of degree $d$. In particular, each such $C$ is pure-dimensional and Cohen-Macaulay. We denote by $X_{d}^{(i)}$, $i=0,1,2$, the subset of $X_{d}$ consisting of $C$, the normal sheaf $\sN_{C/Z}$ of which satisfies $h^1(\sN(-i))=0$. We summarize the main properties of $X_{d}^{(i)}$ as follows:
 \begin{proposition} In each statement below suppose that the corresponding $X_{d}^{(i)}$, $i=0,1,2$, is nonempty.
 \begin{itemize}
 \item[(i)]$X_{d}^{(0)}$  a smooth $4d$-dimensional manifold with a canonical isomorphism\\ $T_C X_{d}^{(0)}\simeq H^0(C,\sN_{C/Z})$ for each $C$.
 \item[(ii)] $X_{d}^{(1)}$ is equipped with a natural integrable $2$-Kronecker structure, i.e.\ a holomorphic vector bundle $E$, $E_C=  H^0(C,\sN_{C/Z}(-1))$, and a bundle map $\alpha:E\otimes \cx^2\to TX_{d}^{(1)}$ such that  $\alpha(E\otimes v)$ is an integrable rank $2d$ distribution for any nonzero $v\in \cx^2$.
\item[(iii)] $X_{d}^{(2)}$ is a $\cx$-hypercomplex manifold, i.e.\ the map $\alpha$ is an isomorphism everywhere. Consequently $X_{d}^{(2)}$ is equipped with a holomorphic Obata connection, i.e. a torsion-free holomorphic connection with holonomy in $GL(d,\cx)\simeq GL(E)$.
\item[(iv)] If $Z$ is also equipped with  an $\sO(2)$-valued symplectic form  along the fibres of $\pi$, then $X_{d}^{(2)}$ is a
$\cx$-hyperk\"ahler manifold, i.e.\ it has a nowhere degenerate  $\cx$-valued symmetric bilinear form $g$, such that the corresponding holomorphic Levi-Civita connection coincides with the Obata connection.
\end{itemize}\label{geom}
\end{proposition}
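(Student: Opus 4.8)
The plan is to read (i)--(iv) as a tower of increasingly rigid structures on the Douady space, obtained by feeding cohomological data about $\sN_{C/Z}$ and about $\sO_Z(i)|_C$ into deformation theory, everything organised around the ``restriction to a fibre'' maps $\rho_\zeta\colon C\mapsto C\cap Z_\zeta$, $Z_\zeta:=\pi^{-1}(\zeta)$. For (i): since $C$ is Cohen--Macaulay of pure codimension $2$ in the smooth threefold $Z$, the Douady space is smooth at $[C]$ with tangent space $H^0(C,\sN_{C/Z})$ as soon as $H^1(C,\sN_{C/Z})=0$ --- standard normal-sheaf deformation theory when $C$ is a local complete intersection, and in general a consequence of the deformation theory of codimension-two Cohen--Macaulay subschemes (Ellingsrud). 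This vanishing defines $X_d^{(0)}$, so $\dim X_d^{(0)}=h^0(\sN_{C/Z})=\chi(\sN_{C/Z})$, and Riemann--Roch gives $\chi(\sN_{C/Z})=-\deg K_Z|_C=4d$ (using $K_Z\simeq\Lambda^2 T_F^\ast(-2)$ and that this restricts to $\sO_{\oP^1}(-4d)$ on $C$). I would then record $X_d^{(2)}\subseteq X_d^{(1)}\subseteq X_d^{(0)}$ and its openness: tensoring $0\to\sO_Z(-1)\to\sO_Z\to\sO_{Z_\zeta}\to 0$ with $\sN_{C/Z}(1-i)$ and restricting to $C$ gives $0\to\sN(-i)\to\sN(1-i)\to\sN(1-i)|_{C\cap Z_\zeta}\to 0$ with $C\cap Z_\zeta$ finite of length $d$, hence $H^1(\sN(-i))\twoheadrightarrow H^1(\sN(1-i))$ and the vanishings propagate; openness is semicontinuity of $h^1$.

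For (ii): cohomology and base change make $E$, $E_C=H^0(\sN_{C/Z}(-1))$, a holomorphic vector bundle of rank $\chi(\sN(-1))=\chi(\sN)-2d=2d$ on $X_d^{(1)}$, and $\alpha$ is the fibrewise multiplication $H^0(\sN(-1))\otimes H^0(\oP^1,\sO(1))\to H^0(\sN)=TX_d^{(1)}$, with $H^0(\sO(1))$ playing the role of $\cx^2$. The crux is integrability of $\alpha(E\otimes v)$ for $0\neq v\in H^0(\sO(1))$, and the idea is to realise this distribution as the vertical bundle of a submersion. Let $\zeta$ be the zero of $v$ and consider $\rho_\zeta\colon X_d^{(1)}\to\Hilb^d(Z_\zeta)$, $C\mapsto C\cap Z_\zeta=(\pi|_C)^{-1}(\zeta)$, a length-$d$ subscheme by flatness. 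Since $C$ meets the divisor $Z_\zeta$ properly, $\sN_{(C\cap Z_\zeta)/Z_\zeta}\simeq\sN_{C/Z}|_{C\cap Z_\zeta}$ and $d\rho_\zeta$ is the restriction $H^0(\sN_{C/Z})\to H^0(\sN_{C/Z}|_{C\cap Z_\zeta})$; its kernel is $H^0(\sN_{C/Z}\otimes\sI_{C\cap Z_\zeta/C})=H^0(\sN_{C/Z}(-1))=\alpha(E_C\otimes v)$ (multiplication by $v$ identifies $\sN(-1)$ with the subsheaf of $\sN$ vanishing along $C\cap Z_\zeta$), and its cokernel injects into $H^1(\sN(-1))=0$. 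Thus $\rho_\zeta$ is a holomorphic submersion and $\alpha(E\otimes v)=\Ker d\rho_\zeta$ is an integrable rank-$2d$ distribution; letting $v$ range over the nonzero sections of $\sO(1)$ gives the integrable $2$-Kronecker structure.

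For (iii): on $X_d^{(2)}$ one also has $h^1(\sN_{C/Z}(-2))=0$. Writing $p=\pi|_C$ and $p_\ast\sN_{C/Z}=\bigoplus_{j=1}^{2d}\sO_{\oP^1}(a_j)$, the vanishing $h^1(p_\ast\sN(-2))=0$ forces $a_j\geq 1$ for all $j$, while $\deg p_\ast\sN_{C/Z}=\chi(p_\ast\sN)-2d=\chi(\sN)-2d=2d$ forces $\sum_j a_j=2d$; hence $p_\ast\sN_{C/Z}\simeq\sO_{\oP^1}(1)^{\oplus 2d}$. Consequently $\alpha$ becomes, fibrewise, the tautological isomorphism $H^0(\sO_{\oP^1}^{\oplus 2d})\otimes H^0(\sO(1))\xrightarrow{\;\sim\;}H^0(\sO_{\oP^1}(1)^{\oplus 2d})$, so $\alpha$ is everywhere an isomorphism and $X_d^{(2)}$ is $\cx$-hypercomplex. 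The holomorphic Obata connection is then the unique torsion-free holomorphic connection compatible with this product structure --- the holomorphic version of Obata's theorem for an integrable $2$-Kronecker structure whose $\alpha$ is an isomorphism --- and, arising from a connection on $E$ (tensored with the trivial connection on $H^0(\sO(1))$), it has holonomy in $GL(E)$.

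For (iv): the trivialisation of $\Lambda^2 T_F^\ast(2)$ makes every fibre $Z_\zeta$ a holomorphic symplectic surface, so by Beauville's theorem the relative Hilbert scheme $\sH\to\oP^1$ of length-$d$ subschemes of the fibres of $\pi$ carries a relative, $\sO_{\oP^1}(2)$-valued symplectic form $\Omega$. Pulling $\Omega$ back along $(C,\zeta)\mapsto C\cap Z_\zeta$ yields a closed, $\sO_{\oP^1}(2)$-valued relative $2$-form on $X_d^{(2)}\times\oP^1$ whose restriction $\omega_\zeta$ to $X_d^{(2)}\times\{\zeta\}$ is, by submersivity of $\rho_\zeta$, a closed $2$-form of rank $2d$ with kernel $\alpha(E\otimes v_\zeta)$, $v_\zeta$ the section of $\sO(1)$ vanishing at $\zeta$. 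Under the identification $TX_d^{(2)}\simeq E\otimes H^0(\sO(1))$ of (iii) the $\sO_{\oP^1}(2)$-twist cancels the twist carried by $H^0(\sO(1))$, so $\{\omega_\zeta\}$ is the image of a single non-degenerate holomorphic section $\omega_E$ of $\Lambda^2 E^\ast$, and $g:=\omega_E\boxtimes\epsilon$ --- with $\epsilon$ the symplectic form on $H^0(\sO(1))$ normalised compatibly with $\omega$ --- is the desired non-degenerate symmetric $\cx$-bilinear form on $X_d^{(2)}$. It remains to see that the Obata connection is metric for $g$: it preserves $E$, so one only needs $\nabla^{\mathrm{Obata}}\omega_E=0$, which I would deduce from the closedness of the whole $\oP^1$-family $\{\omega_\zeta\}$ --- being closed and non-degenerate, this $\sO_{\oP^1}(2)$-valued $2$-form is precisely the twistor $2$-form of the $\cx$-hyperk\"ahler structure, whose holomorphic Levi--Civita connection is torsion-free and product-compatible and hence equals the Obata connection. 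This last step is the main obstacle: showing that closedness of the fibrewise symplectic data forces $\nabla^{\mathrm{Obata}}\omega_E=0$, together with the $\sO_{\oP^1}(2)$-twist bookkeeping needed to make $\omega_E$ and the normalisation of $\epsilon$ canonical. By contrast the integrability in (ii) --- conceptually the heart of the construction --- is routine once the $\rho_\zeta$ are at hand, and (iii) reduces to a short numerical computation.
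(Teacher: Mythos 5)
The paper's own proof is terse: parts (ii)--(iv) are simply cited from \cite{Sigma,BP1}, and only part (i) gets an argument. Your reconstructions of (ii) and (iii) are essentially the arguments of those references and are sound: the distributions $\alpha(E\otimes v)$ are indeed the vertical bundles of the evaluation maps $\rho_\zeta$ to $\Hilb^d(Z_\zeta)$ (submersivity from $h^1(\sN(-1))=0$ via $0\to\sN(-1)\to\sN\to\sN|_{C\cap Z_\zeta}\to 0$), and $h^1(\sN(-2))=0$ together with the degree count forces $\pi_\ast\sN_{C/Z}\simeq\sO_{\oP^1}(1)^{\oplus 2d}$, whence $\alpha$ is an isomorphism (alternatively: kernel and cokernel of $\alpha$ are controlled by $H^0(\sN(-2))$ and $H^1(\sN(-2))$, both zero). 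Your (iv) correctly identifies the Beauville form on the relative Hilbert scheme as the source of the metric --- this is exactly what the paper does in the paragraph following the proposition --- but you yourself flag that you have not closed the argument that the Obata connection preserves $\omega_E$; the paper does not prove this either, deferring to \cite{Sigma,BP1}.

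The one genuine divergence, and the place where your argument has a gap, is part (i). You invoke unobstructedness of codimension-$2$ Cohen--Macaulay subschemes (Ellingsrud/Hartshorne). That is a result of algebraic deformation theory, and the paper explicitly remarks that this route only covers the case of quasiprojective $Z$; here $Z$ is an arbitrary complex $3$-manifold (twistor spaces of hyperk\"ahler $4$-manifolds are typically non-algebraic), so the citation does not literally apply, and ``in the general case we have to proceed differently''. The paper's device is to pass to the relative Hilbert scheme $Z^{[d]}_\pi$ of $d$ points along the fibres of $\pi$ --- a smooth $(2d+1)$-manifold submersing onto $\oP^1$ --- to identify $X_d$ with the Douady space of sections of $Z^{[d]}_\pi\to\oP^1$, and to use $N_s\simeq\pi_\ast\sN_{C/Z}$ so that $h^1(\sN_{C/Z})=0$ gives smoothness at $s$ by Kodaira's criterion. (Your flatness/evaluation maps $\rho_\zeta$ are in effect the shadow of this identification, so the fix is close to what you already have; note also that your Riemann--Roch count $\chi(\sN_{C/Z})=4d$ quietly uses $\deg\Lambda^2 T_F|_C=2d$, i.e.\ the triviality of $\Lambda^2 T^\ast_F(2)$ restricted to $C$, which should be stated as a hypothesis in parts (i)--(iii).)
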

\begin{proof} Part (i) is easy in the case when $Z$ is quasiprojective. It follows then from  the fact that codimension $2$ Cohen-Macualay subspaces are locally unobstructed \cite[\S 2.8]{Hart}. In the general case we have to proceed differently. We consider the relative Hilbert scheme $Z^{[d]}_\pi$ of $d$ points along the fibres of $\pi$. It is a smoooth $(2d+1)$-dimensional manifold with a holomorphic submersion $\pi^{[d]}:Z^{[d]}_\pi \to \oP^1$, and the same argument as in \cite[Prop.\ 3.1]{BP2} shows that $X_d$ is isomorphic to the Douady space of sections of $\pi^{[d]}$. Furthermore, \cite[Lemma 3.2]{BP2} remains true, so that the normal bundle $N_s$ of a section $s$ corresponding to a curve $C$ is isomorphic to $\pi_\ast \sN_{C/Z}$. Hence, if $H^1(C,\sN_{C/Z})=0$, then $h^1(N_s)=0$, and this means that the Douady space of sections is smooth at $s$. This proves (i).
Parts (ii)-(iv) have been proved in \cite{Sigma, BP1}.
\end{proof}

In the case when the curve $C$ is a {\em local complete intersection} (lci), we can say more:
\begin{proposition} \begin{itemize}
\item[(i)] If $C\in X_{d}^{(0)}$, resp.  $C\in X_{d}^{(1)}$, is lci, then there is a canonical isomorphism $$T_C^\ast X_{d}^{(0)}\simeq H^1(C, \sN_{C/Z}\otimes K_Z),\quad \text{resp.}\enskip E_C^\ast \simeq H^1(C,\sN_{C/Z}\otimes K_Z(1)).$$
\item[(ii)] If $C\in X_{d}^{(2)}$ is lci, then there are additional canonical isomorphisms 
$$ E_C\simeq H^1(C,\sN_{C/Z}(-3)),\quad E_C^\ast\simeq H^0(C,\sN_{C/Z}\otimes K_Z(3)).$$
\end{itemize}
\end{proposition}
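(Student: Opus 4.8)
The plan is to deduce both parts from three standard facts about an lci curve $C\subset Z$ in a smooth threefold. Since $C$ is lci of codimension $2$ it is Gorenstein, and its normal sheaf $\sN:=\sN_{C/Z}$ is locally free of rank $2$; writing $\omega_C$ for the dualising sheaf, adjunction gives $\omega_C\simeq\det\sN\otimes K_Z|_C$. For any rank-$2$ locally free sheaf $\sE$ the wedge pairing $\sE\otimes\sE\to\det\sE$ is perfect, so there is a canonical isomorphism $\sE^\vee\simeq\sE\otimes(\det\sE)^{-1}$; combining this with adjunction gives the one identity that does all the work, $\sN^\vee\otimes\omega_C\simeq\sN\otimes K_Z|_C$. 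Finally, $C$ being a Gorenstein projective curve, Serre duality $H^0(C,\sF)^\ast\simeq H^1(C,\sF^\vee\otimes\omega_C)$ is available for every locally free $\sF$ on $C$.

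Part (i) would then be immediate. Applying Serre duality to $\sF=\sN$ and invoking the canonical isomorphism $T_CX_d^{(0)}\simeq H^0(C,\sN)$ of Proposition~\ref{geom}(i) gives $T_C^\ast X_d^{(0)}\simeq H^1(C,\sN^\vee\otimes\omega_C)\simeq H^1(C,\sN\otimes K_Z)$. Applying it to $\sF=\sN(-1)$ and using $E_C=H^0(C,\sN(-1))$ gives $E_C^\ast\simeq H^1\big(C,(\sN(-1))^\vee\otimes\omega_C\big)\simeq H^1(C,\sN\otimes K_Z(1))$.

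For part (ii) I would first describe the splitting type on $\oP^1$ of $V:=(\pi|_C)_\ast\sN$. Since $\pi|_C$ is finite and flat of degree $d$ and $\sN$ is locally free on $C$, hence flat over $\oP^1$, the sheaf $V$ is locally free of rank $2d$, and affineness of $\pi|_C$ together with the projection formula give $H^i(C,\sN(j))\simeq H^i(\oP^1,V(j))$ for all $i,j$. Write $V\simeq\bigoplus_{k=1}^{2d}\sO_{\oP^1}(a_k)$. The hypothesis $C\in X_d^{(2)}$, i.e.\ $h^1(\sN(-2))=0$, forces $a_k\ge1$ for all $k$; on the other hand $\dim X_d^{(0)}=4d$ (Proposition~\ref{geom}(i)) together with $h^0(\oP^1,V)=\chi(\oP^1,V)=\deg V+2d$ (using $h^1(V)=0$) forces $\deg V=2d$, so $a_k=1$ for every $k$. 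Hence $V(-1)$ is trivial and evaluation gives a \emph{canonical} isomorphism $V(-1)\simeq E_C\otimes_\cx\sO_{\oP^1}$, with $E_C=H^0(\oP^1,V(-1))$. Twisting by $\sO_{\oP^1}(-2)$, taking $H^1$, and using the canonical trace isomorphism $H^1(\oP^1,\omega_{\oP^1})\simeq\cx$ then yields $H^1(C,\sN(-3))\simeq H^1(\oP^1,E_C\otimes\sO_{\oP^1}(-2))\simeq E_C$. The remaining isomorphism follows from Serre duality on $C$ and the identity $\sN\otimes K_Z|_C\simeq\sN^\vee\otimes\omega_C$: indeed $H^0(C,\sN\otimes K_Z(3))\simeq H^0\big(C,\sN^\vee\otimes\omega_C(3)\big)\simeq H^1(C,\sN(-3))^\ast\simeq E_C^\ast$.

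I expect the real difficulty to be bookkeeping rather than any isolated hard step: one must verify that each isomorphism is genuinely canonical and compatible with the identifications of Proposition~\ref{geom} — in particular that Serre duality on the possibly singular (but Gorenstein) curve $C$ is being used in its canonical form, and that the evaluation map $H^0(\oP^1,V(-1))\otimes_\cx\sO_{\oP^1}\to V(-1)$, which does not depend on any chosen splitting of $V$, glues correctly with the trace on $H^1(\oP^1,\omega_{\oP^1})$. A minor but essential subtlety is that $\deg V=2d$ is obtained indirectly, from the a priori value $\dim X_d^{(0)}=4d$, rather than from Riemann--Roch on $C$, which would involve the arithmetic genus of $C$.
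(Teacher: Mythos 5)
Your part (i) is essentially the paper's own argument: adjunction for lci subschemes ($K_C\simeq K_Z|_C\otimes\Lambda^2 N$ for $N:=\sN_{C/Z}$), the rank-two self-duality $N^\vee\simeq N\otimes(\Lambda^2N)^{-1}$, and Serre duality on the Gorenstein curve $C$. For part (ii) you take a genuinely different route. The paper pulls back the sequence $0\to\sO_{\oP^1}(-3)\to\sO_{\oP^1}(-2)^{\oplus 2}\to\sO_{\oP^1}(-1)\to 0$ to $C$, tensors with $N$, and reads off $E_C=H^0(C,N(-1))\simeq H^1(C,N(-3))$ from the connecting homomorphism, using only that $N(-2)$ has trivial cohomology ($h^1=0$ by the definition of $X_d^{(2)}$, $h^0=0$ since $\chi(N(-2))=0$); the dual sequence for $N^\ast\otimes K_C$ gives the second isomorphism. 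You instead push forward along the finite flat map $\pi|_C$, pin down the splitting type $V=(\pi|_C)_\ast N\simeq\sO_{\oP^1}(1)^{\oplus 2d}$ (the lower bound $a_k\ge 1$ from $h^1(N(-2))=0$, the degree from $h^0(C,N)=4d$), and then read off both isomorphisms from the triviality of $V(-1)$ together with the trace map and Serre duality. Both arguments are correct. Yours buys the explicit splitting of $V$ --- which is really the content of the $2$-Kronecker/$\cx$-hypercomplex structure in Proposition~\ref{geom}(ii)--(iii) --- at the price of importing $\dim X_d^{(0)}=4d$ from Proposition~\ref{geom}(i); the paper's connecting-homomorphism argument is more self-contained, needing only the cohomological vanishing that defines $X_d^{(2)}$. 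The canonicity concerns you flag are real but harmless here: the connecting homomorphism and your evaluation-plus-trace map are each canonical, and the proposition asserts only the existence of canonical isomorphisms, not a comparison between two candidates.
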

\begin{proof} Write $N$ for the locally free sheaf $\sN_{C/Z}$. The adjunction formula holds for lci subschemes \cite[Ch.\ 6, Thm.\ 4.9]{QL},
and hence:
\begin{equation}  K_C\simeq \left.{K_Z}\right|_C\otimes \Lambda^2 N.\label{KCZ}\end{equation}
Consequently:
$$ T_C^\ast X_{d}^{(0)}\simeq  H^0(C,N)^\ast\simeq H^1(C,K_C\otimes N^\ast)\simeq H^1(C,K_Z\otimes N).$$
The second isomorphism in (i) follows completely analogously, given that $E_C\simeq H^0(C,N(-1))$.
\par
For (ii) observe that we have a short exact sequence
$$ 0\longrightarrow N(-3)\longrightarrow N(-2)\oplus N(-2)\longrightarrow N(-1)\longrightarrow 0, $$ 
from which the first isomorphism follows immediately, since $N(-2)$ has trivial cohomology. Since $N^\ast\otimes K_C(2)$ also has trivial cohomology, the same argument, using the exact sequence
$$ 0\longrightarrow N^\ast\otimes  K_C(1)\longrightarrow \bigl(N^\ast\otimes K_C(2)\bigr)^{\oplus 2}\longrightarrow N^\ast\otimes K_C(3)\longrightarrow 0$$
and \eqref{KCZ}, shows the second isomorphism.
\end{proof}

It follows that a nowhere vanishing section $\omega$ of $\Lambda^2 T_F^\ast(2)\simeq K_Z(4)$ defines an isomorphism
\begin{equation}E_C\simeq H^0(C,\sN_{C/Z}(-1))\stackrel{\cdot\omega}{\longrightarrow}  H^0(C,\sN_{C/Z}\otimes K_Z(3))\simeq E_C^\ast\label{omegad}\end{equation}
for any lci curve $C\in X_{d}^{(2)}$. Write $\omega^{[d]}$ for the corresponding nondegenerate bilinear form on $E$ given by $(s,t)\mapsto (s\omega)(t)$.
\begin{remark} This construction of a symplectic form on $E$ is due to Nash \cite{N}, who showed that the hyperk\"ahler structure on a moduli space of framed Euclidean $SU(2)$-monopoles can be obtained this way.\end{remark}
\par
Let $\zeta\in\oP^1$. For a $C\in  X_{d}^{(2)}$, sections of $\sN_{C/Z}(-1)$ can be identified with the tangent space
to $C_\zeta=C\cap \pi^{-1}(\zeta)$ in the Hilbert scheme of $d$ points in the fibre $\pi^{-1}(\zeta)$. Formula \eqref{omegad} implies that $\omega^{[d]}$ coincides with the induced symplectic form \cite{Beau} on the Hilbert scheme of points (this is obvious on the subset where $C_\zeta$ consists of distinct points, and hence, by continuity, everywhere).
Therefore (cf.\ \cite{Sigma}) the symplectic form $\omega^{[d]}$ induces a $\cx$-hyperk\"ahler structure on  $X_{d}^{(2)}$, and a pseudo-hyperk\"ahler structure on the $\sigma$-invariant subset of $X_{d}^{(2)}$, if $Z$ is equipped with an antiholomorphic involution $\sigma$ covering the antipodal map.
\par
Following Nash \cite{N}, we are going to give another proof of the skew-symmetry of $\omega^{[d]}$, since the argument will be helpful when proving Theorem A(i).
\begin{proposition} $\omega^{[d]}$ is skew-symmetric.\label{other}\end{proposition}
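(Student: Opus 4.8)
The plan is to produce an explicitly alternating formula for $\omega^{[d]}$. Write $N=\sN_{C/Z}$ and consider the exact sequence $0\to K_C\to K_C(1)^{\oplus2}\to K_C(2)\to0$ (the Euler sequence of $\oP^1$ pulled back along $\pi|_C$ and twisted by $K_C(1)$); composing its connecting map $H^0(C,K_C(2))\to H^1(C,K_C)$ with Serre's isomorphism $H^1(C,K_C)\simeq\cx$ gives a canonical linear functional
\[\tau\colon H^0\bigl(C,K_C(2)\bigr)\longrightarrow\cx\]
(which is nothing but integration along the fibres of the finite flat morphism $\pi|_C\colon C\to\oP^1$, i.e.\ its trace map, although for the skew-symmetry only its being a \emph{fixed} functional will matter). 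Since $\omega$ trivialises $\Lambda^2T_F^\ast(2)\simeq K_Z(4)$, the adjunction formula \eqref{KCZ} identifies $K_C(2)$ with $\Lambda^2N(-2)=\Lambda^2\bigl(N(-1)\bigr)$, so the wedge product is a fibrewise \emph{skew} pairing $N(-1)\otimes N(-1)\to K_C(2)$, $(s,t)\mapsto s\wedge t$. I claim that, up to a universal nonzero scalar,
\[\omega^{[d]}(s,t)=\tau(s\wedge t),\qquad s,t\in E_C=H^0(C,N(-1)),\]
whence $\omega^{[d]}(s,t)=-\omega^{[d]}(t,s)$.

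To prove the displayed identity I would unwind the isomorphism $E_C^\ast\simeq H^0(C,N\otimes K_Z(3))$ used in \eqref{omegad}. By construction it is Serre duality $E_C^\ast=H^0(C,N(-1))^\ast\simeq H^1(C,N^\ast(1)\otimes K_C)$ followed by the inverse of the connecting isomorphism $\delta$ of $0\to N^\ast\otimes K_C(1)\to(N^\ast\otimes K_C(2))^{\oplus2}\to N^\ast\otimes K_C(3)\to0$ (whose middle term has no cohomology), together with \eqref{KCZ} and the canonical isomorphism $N\simeq N^\ast\otimes\Lambda^2N$ which turns $N^\ast\otimes K_C(3)$ into $N\otimes K_Z(3)$. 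Hence, for $s,t\in E_C$, $\omega^{[d]}(s,t)$ is the Serre pairing of $t\in H^0(C,N(-1))$ against the class $\delta(s\omega)\in H^1(C,N^\ast(1)\otimes K_C)$. Two observations then finish it. First: composing multiplication by $\omega|_C$, viewed as a map $N(-1)\to N\otimes K_Z(3)$, with the above identifications turns it (up to sign) into the map $N(-1)\to\mathcal{H}om\bigl(N(-1),K_C(2)\bigr)$, $s\mapsto(t\mapsto s\wedge t)$, because $N\xrightarrow{\ \sim\ }N^\ast\otimes\Lambda^2N$ is precisely the wedge $v\mapsto(w\mapsto w\wedge v)$ and the trivialisation of $K_Z(4)$ by $\omega$ is exactly what matches $\Lambda^2N(-2)$ with $K_C(2)$ in \eqref{KCZ}. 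Second: tensoring the displayed exact sequence by the section $t\colon\sO_C\to N(-1)$ and then applying the trace $\End(N)\to\sO_C$ produces a morphism of short exact sequences onto $0\to K_C\to K_C(1)^{\oplus2}\to K_C(2)\to0$; functoriality of connecting homomorphisms, combined with the fact that the Serre pairing $\langle t,-\rangle$ is by definition ``cup with $t$, contract $\End(N)$, apply $H^1(C,K_C)\simeq\cx$'', then yields
\[\omega^{[d]}(s,t)=\bigl\langle t,\delta(s\omega)\bigr\rangle=\tau\bigl((s\omega)(t)\bigr)=\pm\,\tau(s\wedge t),\]
as claimed.

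The only genuine obstacle is bookkeeping: keeping track of the signs and of the harmless universal scalar produced by the connecting homomorphisms, and — only if one also wants the description of $\tau$ as integration along the fibres, which is what gets used in the proof of Theorem A(i) — invoking correctly the compatibility of Grothendieck--Serre duality on $C$ with the trace of the finite morphism $\pi|_C$. For the skew-symmetry statement itself neither point is needed: it suffices that $\omega^{[d]}(s,t)$ is \emph{some} fixed linear functional applied to $s\wedge t$, and one could even bypass the computation by continuity, noting that on the dense open locus where $C_\zeta$ is reduced the identity reduces to the classical fact that Beauville's holomorphic symplectic form on the Hilbert scheme of points of a symplectic surface is alternating. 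The reason to carry out the cohomological argument is precisely that the resulting description of $\omega^{[d]}$ as a fibrewise wedge followed by the trace of $\pi|_C$ is the form in which it is needed for the signature computation in Theorem A(i).
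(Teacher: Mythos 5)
Your argument is correct and is essentially the paper's own proof: both factor $\omega^{[d]}$ through the skew wedge pairing $H^0(C,N(-1))\times H^0(C,N(-1))\to H^0\bigl(C,(\Lambda^2N)(-2)\bigr)$ followed by a fixed linear functional, and your connecting homomorphism $H^0(C,K_C(2))\to H^1(C,K_C)$ is precisely cup product with the extension class $\lambda$ that the paper uses. The extra identification of this functional with the trace of $\pi|_C$ is a harmless (and useful) elaboration, not a different method.
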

\begin{proof}\let\qed\relax We can express $\omega^{[d]}$ as the composition of the natural skew-symmetric map 
 $$H^0(C,N(-1))\times H^0(C,N(-1)){\to}H^0\bigl(C,(\Lambda^2N)(-2)\bigr)$$
 with 
 $$ H^0\bigl(C,(\Lambda^2N)(-2)\bigr)\simeq H^0(C,K_Z^\ast(-2)\otimes K_C)\stackrel{\cdot(\omega\lambda)}{\longrightarrow} H^1(C,K_C)\simeq \cx,$$
 where $\lambda\in H^1(C,\sO_C(-2))$ is the pullback of the extension class of
 $$0\to \sO_{\oP^1}(-3)\to \sO_{\oP^1}(-2)\oplus \sO_{\oP^1}(-2)\to \sO_{\oP^1}(-1)\to 0.\quad\quad\Box$$
 \end{proof}

 \section{Rational curves}
 
Let $\pi:Z\to \oP^1$ be as in the previous section. We denote by $X_{d,0}$ the component of $X_d$ consisting of smooth rational curves $C\simeq \oP^1$, and write $X_{d,0}^{(i)}=X_d^{(i)}\cap X_{d,0}$, $i=0,1,2$. We remark that $C\in X_{d,0}^{(2)}$ if and only if its normal bundle is isomorphic to $\sO_{\oP^1}(2d-1)\oplus \sO_{\oP^1}(2d-1)$.
\par
For a $C\in X_{d,0}$, let $R_C$, resp.\ $B_C$, be the ramification divisor, resp.\ the branch divisor, of $\pi|_C$. These are $0$-dimensional subschemes of $\oP^1$ and we obtain a holomorphic map
\begin{equation} \rho:  X_{d,0}\longrightarrow \oP^{2d-2}, \quad C\mapsto B_C.\label{rho}\end{equation}
\subsection{Covers  of $\oP^1$ and their parametrisations\label{covers}}
In order to understand the map $\rho$, we make a brief detour. The map $\rho$ can be viewed abstractly as associating to a degree cover $\pi:C\to \oP^1$ its branch divisor $B_C$. On the other hand, we can also parameterise $C$, $f:\oP^1\to C$, and obtain a degree $d$ rational map $\phi=\pi\circ f$. Let $\Rat_d$ denote the space of degree $d$ rational maps $\oP^1\to\oP^1$. The quotient of $\Rat_d$ by $PGL(2,\cx)$ can be viewed as the moduli space of abstract degree $d$ covers of $\oP^1$, but since the action of $PGL(2,\cx)$ has fixed points, this quotient is not manifold.
On the other hand, we can associate to $\phi$ its branch divisor. Classical Hurwitz conditions \cite{Hur} imply that, given an effective divisor $B$ of degree $2d-2$ on $\oP^1$, there exist, up to automorphisms, only finitely many rational maps $\phi:\oP^1\to \oP^1$ of degree $d$  with branch divisor $B$.  
Let $\phi\in \Rat_d$ and consider the induced sequence 
\begin{equation} 0\longrightarrow T\oP^1\stackrel{d\phi}{\longrightarrow} \phi^\ast T\oP^1\longrightarrow\sF_\phi\longrightarrow 0,\label{sF2}\end{equation}
where $\sF_\phi$ is supported on the ramification divisor of $\phi$. The space of global sections of the middle term is naturally isomorphic to $T_\phi\Rat_d$, while global sections of $T\oP^1$ correspond to infinitesimal automorphisms of $\oP^1$. Thus we can identify global sections of $\sF_\phi$ with deformations of the branch divisor $B$ of $\phi$, i.e.\ locally on $\Rat_d$ we have a natural isomorphism $H^0(\oP^1,\sF_\phi)\simeq T\oP^{2d-2}$.

\subsection{The geometry of the map $\rho$\label{map-rho}} With these preparations, we can prove:
\begin{proposition} The map $\rho$ is a submersion on an open subset of $X_{d,0}^{(0)}$ where $h^1({T_F}|_C)=0$. This open subset contains $X_{d,0}^{(2)}$.
\end{proposition}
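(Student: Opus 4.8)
The plan is to identify the differential $d\rho_C$ with a connecting map on cohomology arising from a short exact sequence of sheaves on $C$, to deduce surjectivity from $h^1(T_F|_C)=0$, and then to obtain the containment $X_{d,0}^{(2)}\subset\{C:h^1(T_F|_C)=0\}$ from a degree count. First I would construct
\begin{equation}
0\longrightarrow T_F|_C\longrightarrow\sN_{C/Z}\longrightarrow\sF_C\longrightarrow0,\label{planseq}
\end{equation}
where $\sF_C=\operatorname{coker}\bigl(d(\pi|_C)\colon TC\to(\pi|_C)^\ast T\oP^1\bigr)$ is a torsion sheaf of length $2d-2$ supported on the ramification divisor $R_C$. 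This is a diagram chase between the normal bundle sequence $0\to TC\to TZ|_C\to\sN_{C/Z}\to0$ and the restriction to $C$ of $0\to T_F\to TZ\to\pi^\ast T\oP^1\to0$: the composite $T_F|_C\hookrightarrow TZ|_C\to\sN_{C/Z}$ is injective because $T_F|_C\cap TC$ vanishes away from $R_C$, hence vanishes as a subsheaf of the locally free sheaf $TZ|_C$, and its cokernel is $TZ|_C/(T_F|_C+TC)\cong(\pi|_C)^\ast T\oP^1/d(\pi|_C)(TC)=\sF_C$. Pulling \eqref{planseq} back along a parametrisation $f\colon\oP^1\xrightarrow{\sim}C$ and putting $\phi=\pi\circ f$ identifies $\sF_C$ with the sheaf $\sF_\phi$ of \eqref{sF2}, since precomposing $d(\pi|_C)$ with $df$ turns it into $d\phi$; hence $H^0(C,\sF_C)\cong H^0(\oP^1,\sF_\phi)\cong T_{B_C}\oP^{2d-2}$ by \S\ref{covers}. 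Note also that the surjection in \eqref{planseq} is precisely the map induced by $TZ|_C\xrightarrow{d(\pi|_C)}(\pi|_C)^\ast T\oP^1\to\sF_C$, which kills $TC$.

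Next I would check that, under $T_CX_{d,0}^{(0)}\cong H^0(C,\sN_{C/Z})$ and the identifications above, $d\rho_C$ coincides with the map $H^0(C,\sN_{C/Z})\to H^0(C,\sF_C)$ induced by \eqref{planseq}. I would do this by factoring $\rho$ through the space $\widetilde X$ of parametrised curves, i.e.\ of morphisms $g=\iota_C\circ f\colon\oP^1\to Z$ with $g(\oP^1)\in X_{d,0}^{(0)}$: this $\widetilde X$ is smooth with $T_g\widetilde X=H^0(\oP^1,g^\ast TZ)$ (because $h^1(TZ|_C)=0$ on $X_{d,0}^{(0)}$), the forgetful map $\widetilde X\to X_{d,0}^{(0)}$ is a $PGL(2,\cx)$-bundle whose differential is the natural surjection $H^0(TZ|_C)\twoheadrightarrow H^0(\sN_{C/Z})$, and $\widetilde\rho\colon\widetilde X\to\Rat_d$, $g\mapsto\pi\circ g$, has differential $H^0$ of $g^\ast d\pi\colon g^\ast TZ\to\phi^\ast T\oP^1$. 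Since $\rho$ composed with $\widetilde X\to X_{d,0}^{(0)}$ equals the branch-divisor map $\Rat_d\to\oP^{2d-2}$ composed with $\widetilde\rho$, and the differential of the branch-divisor map is $H^0(\phi^\ast T\oP^1)\to H^0(\sF_\phi)$ by \S\ref{covers}, the resulting commutative square — together with the identification from the previous step that $g^\ast TZ\to\phi^\ast T\oP^1\to\sF_\phi$ is the pullback of $TZ|_C\to\sN_{C/Z}\to\sF_C$ — forces $d\rho_C$ to be the map induced by \eqref{planseq}. The long exact sequence of \eqref{planseq} then reads
\begin{equation*}
H^0(C,\sN_{C/Z})\xrightarrow{\ d\rho_C\ }H^0(C,\sF_C)\longrightarrow H^1(C,T_F|_C),
\end{equation*}
so $d\rho_C$ is surjective whenever $h^1(T_F|_C)=0$; moreover, as $\sF_C$ is torsion, \eqref{planseq} also gives $h^1(\sN_{C/Z})=0$ at such $C$, so that $\{C:h^1(T_F|_C)=0\}$ is an open subset of $X_{d,0}^{(0)}$ (by semicontinuity) on which $\rho$ is a submersion.

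For the containment of $X_{d,0}^{(2)}$, suppose $C\in X_{d,0}^{(2)}$, so that $\sN_{C/Z}\cong\sO_{\oP^1}(2d-1)^{\oplus2}$. Then \eqref{planseq} gives $\deg T_F|_C=2(2d-1)-(2d-2)=2d$, and writing $T_F|_C\cong\sO_{\oP^1}(a)\oplus\sO_{\oP^1}(b)$ with $a\ge b$ and $a+b=2d$, the summand $\sO_{\oP^1}(a)$ admits a nonzero homomorphism to $\sN_{C/Z}\cong\sO_{\oP^1}(2d-1)^{\oplus2}$, which forces $a\le 2d-1$ and hence $b=2d-a\ge1$. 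Thus $a\ge b\ge1$, so $h^1(T_F|_C)=0$ and $X_{d,0}^{(2)}$ is contained in the open set of the previous step.

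The step I expect to be the main obstacle is the second one: one must show that $d\rho_C$ genuinely is the map induced by \eqref{planseq}, rather than merely some map completing the same commutative square. The subtlety is that deforming $C\subset Z$ also deforms the \emph{source} of the cover $\pi|_C\colon C\to\oP^1$; routing the argument through $\widetilde X\subset\operatorname{Mor}(\oP^1,Z)$, where every source is the fixed $\oP^1$, is exactly what makes this bookkeeping clean. The diagram chase producing \eqref{planseq} and the degree count for $X_{d,0}^{(2)}$ are routine.
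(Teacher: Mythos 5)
Your proposal is correct and follows essentially the same route as the paper: the key object is the short exact sequence $0\to T_F|_C\to\sN_{C/Z}\to\sF\to0$ obtained by comparing the normal bundle sequence with the restriction of $0\to T_F\to TZ\to\pi^\ast T\oP^1\to0$, the differential $d\rho_C$ is identified with the induced map on global sections, and the containment of $X_{d,0}^{(2)}$ follows from the degree count $c_1(T_F|_C)=2d$ together with the bound $2d-1$ on the degrees of its summands. You supply more detail than the paper in justifying that $d\rho_C$ really is the induced map (the paper simply asserts this), and your degree bound uses the injection $T_F|_C\hookrightarrow\sN_{C/Z}$ directly rather than passing through $TZ|_C$, but these are cosmetic differences.
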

\begin{proof} Let $C\in X_{d,0}^{(0)}$. We have an analogue of \eqref{sF2}: 
\begin{equation} 0\longrightarrow TC\stackrel{d\pi}{\longrightarrow} \pi^\ast T\oP^1\longrightarrow\sF\longrightarrow 0.\label{sF}\end{equation}
The sheaf $\sF$ is supported on the ramification divisor $R_C$ and is isomorphic to $\sF_\phi$ for any parameterisation of $C$. Owing to the above discussion we have a natural isomorphism $T_{B_C}\oP^{2d-2}\simeq H^0(C,\sF)$. 
\par
We also have the following two short exact sequences:
\begin{equation} 0\longrightarrow TC\longrightarrow TZ|_C{\longrightarrow} N_{C/Z}\longrightarrow 0,\label{normal}\end{equation}
\begin{equation} 0\longrightarrow T_F\longrightarrow TZ\stackrel{d\pi}{\longrightarrow} \pi^\ast T\oP^1\longrightarrow 0,\label{dpi}\end{equation}
where $T_F$ is the vertical tangent bundle.
Observe that the composition 
$$  TZ|_C\stackrel{d\pi}{\longrightarrow} \pi^\ast T\oP^1\longrightarrow \sF$$
factors through $N_{C/Z}$, and we obtain the following short exact sequence of sheaves on $C$:
\begin{equation} 0\longrightarrow {T_F}|_C\longrightarrow N_{C/Z}\longrightarrow \sF\longrightarrow 0.\label{NtoF}\end{equation}
The induced map $H^0(C,N_{C/Z})\to H^0(C,\sF)$ is $d\rho|_C$, and the first statement follows. If  $C\in X_{d,2}^{(2)}$, then $N_{C/Z}\simeq \sO_{\oP^1}(2d-1)^{\oplus 2}$. Sequence
\eqref{normal} implies then that the direct summands of $TX|_C$ have degree at most $2d-1$. Sequence \eqref{dpi}, restricted to $C$, implies now that the direct summands of ${T_F}|_C$ have degree at most $2d-1$. 
Since $c_1({T_F}|_C)=2d$, it follows that the direct summands of ${T_F}|_C$ have positive degree. 
\end{proof}
We now consider the structure of the fibres of $\rho$. As discussed in \S\ref{covers},  the connected components of $\rho^{-1}(B)$ correspond to $PGL(2,\cx)$-orbits of rational maps with branch divisor $B$. Let us fix such a rational map $\phi:\oP^1\to\oP^1$, and suppose that there exists a $C_0\in X^{(0)}_{d,0}$ with a parameterisation $f_0:\oP^1\to C_0$ such that $\pi\circ f_0= \phi$. Then the connected component $X_\phi$  of $\rho^{-1}(B)$ containing $C$ is isomorphic to the space of embeddings $f:\oP^1\to Z$  such that $\pi\circ f=\phi$. Let $\phi^\ast Z$ denote the fibred product
\begin{equation} \phi^\ast Z=\{(t,z)\in \oP^1\times Z\:;\: \phi(t)=\pi(z)\},\label{fibred}
\end{equation}
and $\tilde\phi:\phi^\ast Z\to Z$ the projection on the second coordinate. 
We conclude that $X_\phi$ is isomorphic to the open subset of the Kodaira moduli space of sections $s$ of $\phi^\ast Z\to \oP^1$ such that $\tilde\phi\circ s$ is an isomorphism.
The tangent space to $X_\phi$ at $C$ is canonically isomorphic to $H^0(C, {T_F}|_C)$ (on the open subset where $h^1({T_F}|_C)=0$).
\begin{remark} Let $s:\oP^1\to Z$ be a section of $\pi$ with normal bundle isomorphic to $\sO(1)\oplus \sO(1)$. Then $s\circ \phi$ is a section of $\phi^\ast Z$ with normal bundle $\sO(d)\oplus\sO(d)$. Hence $\phi^\ast Z$ has a $(2d+2)$-dimensional smooth family of sections with this normal bundle. A generic element of this family will map to an embedded $\oP^1$ in $Z$. Sequence \eqref{NtoF} implies then that the normal bundle $N$ of this $\oP^1$ satisfies $h^1(N(-1))=0$. Consequently, each fibre of $\rho$ contains elements of $X_{d,0}^{(1)}$.\label{X1}
\end{remark}

\begin{example} Let $Z$ be the twistor space of the flat $\oR^4$, i.e. the total space of $\sO_{\oP^1}(1)\oplus \sO_{\oP^1}(1)$. Equivalently $Z=\oP^3\backslash \oP^1$, where $\oP^1=\{[z_0,z_1,0,0]\}$. The map $\pi:Z\to \oP^1$ is then the projection onto the last two coordinates. Let $C$ be a degree $d$ rational curve in $Z$, parameterized by $[f_0(u,v),\dots,f_3(u,v)]$, where $f_i(u,v)$ are homogeneous polynomials of degree $d$, $i=0,\dots,3$. The normal bundle of $C$ is then the cokernel of
$Df:\sO_{\oP^1}(1)\oplus\sO_{\oP^1}(1)\to \sO_{\oP^1}(d)^{\oplus 4}$, where $Df$ is the Jacobian matrix of $(f_0,\dots,f_3)$ \cite{GS}. The sheaf $\sF$ is the cokernel of $D\phi:\sO_{\oP^1}(1)\oplus\sO_{\oP^1}(1)\to \sO_{\oP^1}(d)\oplus \sO_{\oP^1}(d)\simeq {T_F}|_C$ where $\phi=(f_2,f_3)$, and 
$T_F=\{(a,b,0,0)\in TZ\}$. If $N_{C/Z}\simeq \sO_{\oP^1}(2d-1)\oplus \sO_{\oP^1}(2d-1)$, then we have an exact sequence
$$ 0\to \sO_{\oP^1}(1)\oplus\sO_{\oP^1}(1)\stackrel{Df}{\longrightarrow} \sO_{\oP^1}(d)^{\oplus 4}\stackrel{(\alpha_1,\alpha_2)}{\longrightarrow} \sO_{\oP^1}(2d-1)\oplus \sO_{\oP^1}(2d-1)\to 0,$$
where $\alpha_1$ and $\alpha_2$ are $2\times 2$ matrices of degree $d-1$ homogeneous polynomials in $u,v$. If we write 
$\phi=(f_2,f_3)$ and $\psi=(f_0,f_1)$, then the exactness of the above sequence implies $\alpha_1D\psi+\alpha_2D\phi=0$. The sequence \eqref{NtoF} is then 
$$  0\longrightarrow \sO_{\oP^1}(d)\oplus \sO_{\oP^1}(d) \stackrel{\alpha_1}{\longrightarrow} \sO_{\oP^1}(2d-1)\oplus \sO_{\oP^1}(2d-1)\longrightarrow \sF\longrightarrow 0.$$
The connected component $X_\phi$ of $\rho^{-1}(B)$ is an open subset of $\cx^{2d+2}$ consisting of pairs $(f_0(u,v),f_1(u,v))$ of homogeneous polynomials of degree $d$ such that $[f_0,f_1,f_2,f_3]$ is an embedding.\label{P3}
\end{example}

\section{Real manifolds\label{realm}}

We now suppose, in addition, that $Z$ is equipped with an antiholomorphic involution $\sigma$ covering the antipodal map on $\oP^1$. We denote by $M_d^{(i)}$, $i=0,1,2$, the $\sigma$-invariant part of the corresponding  $X_d^{(i)}$, and by  $M_{d,0}^{(i)}$ the $\sigma$-invariant part of  $X_{d,0}^{(i)}$. The manifolds $M_d^{(1)}$ and $M_d^{(2)}$ are equipped with the real versions of the geometry stated in Proposition \ref{geom}, i.e.\ an integrable quaternionic $2$-Kronecker structure in the case of $M_d^{(1)}$, and a hypercomplex or pseudo-hyperk\"ahler structure on $M_d^{(2)}$. In the case of rational curves we have the following restriction on $d$:
\begin{lemma}{\cite[Prop.\ 5.9]{BP2}} Let $C$ be a connected projective curve of arithmetic genus $0$ equipped with a flat projection $\pi:C\to\oP^1$ of degree $d$. If $C$ admits an antiholomorphic involution covering the antipodal map on $\oP^1$,  then $d$ is odd.\hfill $\Box$\label{d-odd}\end{lemma}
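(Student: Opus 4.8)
\emph{Proof proposal.} The obvious route via Riemann--Hurwitz is a dead end: the ramification divisor $R_C$ has degree $2d-2$ and is invariant under the involution, which is fixed-point-free since the antipodal map $\tau_0$ is, so $\deg R_C$ is automatically even, and likewise for the branch divisor; no parity constraint on $d$ emerges. The plan instead is to extract the parity from the determinant of the pushforward $\pi_\ast\sO_C$, whose degree encodes $1-p_a(C)-d$, together with the elementary fact that on $\oP^1$ only the even line bundles carry an antiholomorphic involution covering $\tau_0$.

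First I would note that $\pi$, being proper with finite fibres, is a finite morphism, and flat; hence $\sE:=\pi_\ast\sO_C$ is locally free of rank $d$ over $\sO_{\oP^1}$ and $R^{>0}\pi_\ast\sO_C=0$. Connectedness and $p_a(C)=0$ then give $\chi(\oP^1,\sE)=\chi(C,\sO_C)=1$, so Riemann--Roch on $\oP^1$ yields $\deg\sE=\chi(\sE)-\rank\sE=1-d$; in particular $\det\sE\simeq\sO_{\oP^1}(1-d)$.

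Next I would transport the reality datum. An antiholomorphic involution $\tau$ of $C$ with $\pi\circ\tau=\tau_0\circ\pi$ restricts over each $p\in\oP^1$ to an antiholomorphic isomorphism $\pi^{-1}(p)\to\pi^{-1}(\tau_0 p)$, and pushing forward the structure sheaf these assemble into a conjugate-linear bundle automorphism $\rho$ of $\sE$ lying over $\tau_0$ with $\rho^2=\mathrm{id}$ (since $\tau^2=\mathrm{id}$). Taking $d$-th exterior powers, $\Lambda^d\rho$ is a conjugate-linear automorphism of $\det\sE\simeq\sO_{\oP^1}(1-d)$ over $\tau_0$ with $(\Lambda^d\rho)^2=\Lambda^d(\rho^2)=\mathrm{id}$, i.e.\ a genuine real structure. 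It remains to invoke the classification: any antiholomorphic lift of $\tau_0$ to $\sO_{\oP^1}(k)$ has square a real scalar; the tautological $\sO(-1)\subset\oP^1\times\cx^2$ inherits from the quaternionic structure $j$ on $\cx^2$ (with $j^2=-\mathrm{id}$) a lift of square $-\mathrm{id}$, so $\sO(k)=\sO(-1)^{\otimes(-k)}$ carries a lift of square $(-1)^k$, and any two lifts differ by a constant $c\in\cx^\ast$, which rescales the square by $|c|^2>0$ and so cannot change its sign. Hence $\sO_{\oP^1}(k)$ admits a real structure over $\tau_0$ exactly when $k$ is even; applied to $k=1-d$ this forces $d$ to be odd.

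The main obstacle is conventional rather than computational: one must check carefully that ``antiholomorphic involution covering $\tau_0$'' descends to a $+1$ real structure on $\det\sE$ and not a quaternionic one — this is precisely where $\tau^2=\mathrm{id}$ enters — and pin down the sign in the line-bundle classification. A more classical alternative is to parametrise $C$ and reduce to the assertion that a degree-$d$ rational self-map of $\oP^1$ commutes with $\tau_0$ only for odd $d$, proved by a mod-$2$ degree count on $\oP^1/\tau_0=\oR\oP^2$ (the induced self-map of $\oR\oP^2$ pulls the nontrivial double cover back to itself, hence is nontrivial on $H^1(\oR\oP^2;\Z/2)$, hence on $H^2$, hence has mod-$2$ degree $1\equiv d$); but a reducible or non-reduced $C$ then requires extra combinatorial bookkeeping with the tree of components and their multiplicities, which the determinant argument avoids entirely.
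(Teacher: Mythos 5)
Your proof is correct, and it takes a genuinely different route from the paper: the paper does not prove this lemma at all but cites it as \cite[Prop.\ 5.9]{BP2}, and the only argument it sketches (in the introduction) is for the special case of smooth rational curves, reduced to the fact that a degree-$d$ rational self-map of $\oP^1$ can commute with the antipodal map only for odd $d$. Your argument --- $\det\pi_\ast\sO_C\simeq\sO_{\oP^1}(1-d)$ from finiteness, flatness and $\chi(\sO_C)=1$, followed by the observation that the involution of $C$ induces a real (square $+\mathrm{id}$) structure on $\det\pi_\ast\sO_C$ over the antipodal map, which $\sO_{\oP^1}(k)$ admits only for even $k$ --- is self-contained and, importantly, works in the full generality of the statement, including reducible and non-reduced $C$, which is exactly where the parametrisation approach would need the extra bookkeeping you mention. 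All the key steps check out: $\pi$ finite flat gives $\pi_\ast\sO_C$ locally free of rank $d$ with $\deg=\chi-\rank=1-d$; the sign of the square of an antiholomorphic lift of the antipodal map is invariant under rescaling by $c$ (since it changes by $|c|^2$), and equals $(-1)^k$ on $\sO(k)$. The only soft spot is in your closing aside: the claim that the descended map of $\oR\oP^2$ ``has mod-2 degree $1\equiv d$'' silently uses that the mod-$2$ degree of the quotient map equals $d\bmod 2$, which needs a regular-value count through the double cover rather than just the $H^*(\cdot;\Z/2)$ functoriality (note $p^*$ vanishes on $H^2(\oR\oP^2;\Z/2)$); but as this is only your sketch of the alternative route, it does not affect the main proof.
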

We assume, therefore, that $d$ is odd. The restriction of \eqref{rho} to $ M_{d,0}^{(0)}$ yields a smooth map
\begin{equation} \rho:  M_{d,0}^{(0)}\longrightarrow \oR\oP^{2d-2}.\label{rho2}\end{equation}
The connected components of its fibres correspond to $SO(3)$-orbits of rational maps $\phi:\oP^1\to \oP^1$ of degree $d$ which commute with the antipodal map (up to automorphisms). We recall the notion of a $d$-hypercomplex manifold \cite{DM1,DM2,BiTAMS}:
\begin{definition} Let $d\in \oN$ be odd. An almost $d$-hypercomplex structure on a smooth manifold $M$ is given by an isomorphism
 $T^\cx M\simeq E\otimes \cx^{d+1}$, where $E$ is a quaternionic vector bundle. Moreover, this isomorphism  is required to intertwine the complex conjugation on $T^\cx M$ and the tensor product of the quaternionic structure on $E$ and the standard quaternionic structure on $\cx^{d+1}$.
 \par
 An almost $d$-hypercomplex structure is integrable, i.e.\ a $d$-hypercomplex structure, if, for each Borel subgroup $B_\zeta\subset SL(2,\cx)$, $\zeta\in \oP^1$, the subbundle $E\otimes K_\zeta$ is involutive, where $K_\zeta$ is the direct sum of all, except the lowest, weight subspaces of $\cx^{d+1}$ for the standard irreducible representation of $SL(2,\cx)$. \label{d-hcx}
\end{definition}
As discussed in \cite{BiTAMS}, this is the natural geometry on the space of sections of a holomorphic submersion $\pi:Z\to \oP^1$, the normal bundle of which splits as $\bigoplus\sO(d)$.
\begin{proposition} The  fibres of the map $\rho$ restricted to the open subset of $  M_{d,0}^{(0)}$ where ${T_F}|_C\simeq \sO_{\oP^1}(d)\oplus\sO_{\oP^1}(d)$ 
have a natural $d$-hypercomplex structure.
\end{proposition}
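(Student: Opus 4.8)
The plan is to realise each fibre of $\rho$, restricted to the open subset $\sU\subset M_{d,0}^{(0)}$ on which ${T_F}|_C\simeq \sO_{\oP^1}(d)\oplus\sO_{\oP^1}(d)$, as an open subset of the real Kodaira moduli space of sections of generic splitting type of a suitable auxiliary submersion over $\oP^1$, and then to invoke the construction of \cite{BiTAMS}. Concretely, I would fix $B$ in the image of $\rho$. By the discussion in \S\ref{map-rho} and \S\ref{realm}, a connected component of $\rho^{-1}(B)$ is determined by an $SO(3)$-orbit of degree $d$ rational maps $\phi:\oP^1\to\oP^1$ which commute with the antipodal map $\tau$ and have branch divisor $B$; fixing such a $\phi$, I let $X_\phi$ be the corresponding component and $M_\phi$ its $\sigma$-invariant part, so that $X_\phi$ is the open subset of the Kodaira moduli space of sections $s$ of the first projection $p:\phi^\ast Z\to\oP^1$ from \eqref{fibred} for which $\tilde\phi\circ s$ is an isomorphism onto a curve $C\in X_{d,0}^{(0)}$.

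Next I would equip $\phi^\ast Z$ with the data needed to apply \cite{BiTAMS}. It is a smooth complex threefold because $\pi$ is a submersion; $p$ is a holomorphic submersion with vertical tangent bundle $\tilde\phi^\ast T_F$; and, crucially, it carries the antiholomorphic involution $\tilde\sigma(t,z)=(\tau(t),\sigma(z))$, which is well defined and covers the antipodal map precisely because $\phi\circ\tau=\tau\circ\phi$ and $\pi\circ\sigma=\tau\circ\pi$: if $\phi(t)=\pi(z)$ then $\phi(\tau(t))=\tau(\pi(z))=\pi(\sigma(z))$. Under the correspondence $s\leftrightarrow f=\tilde\phi\circ s$, a section $s$ is $\tilde\sigma$-invariant exactly when $f$ is $\sigma$-equivariant, i.e.\ when $C=f(\oP^1)$ is $\sigma$-invariant; hence $M_\phi$ is an open subset of the moduli space of $\tilde\sigma$-invariant sections of $p$.

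I would then identify the normal bundle of such a section: restricting the exact sequence $0\to \tilde\phi^\ast T_F\to T(\phi^\ast Z)\to p^\ast T\oP^1\to 0$ to $s(\oP^1)$ and using the splitting induced by $ds$ gives a canonical isomorphism $N_s\simeq s^\ast\tilde\phi^\ast T_F\simeq f^\ast T_F\simeq {T_F}|_C$. Consequently the sections corresponding to curves in $\sU$ are exactly the $\tilde\sigma$-invariant sections with normal bundle $\sO_{\oP^1}(d)^{\oplus 2}$ for which $\tilde\phi\circ s$ is an embedding (an open condition), and $M_\phi\cap\sU$ is an open subset of the moduli space of such sections. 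Since $h^1\bigl(\sO_{\oP^1}(d)^{\oplus2}\bigr)=0$, this moduli space is smooth of complex dimension $2(d+1)$ with tangent space at $s$ canonically $H^0(\oP^1,\sO_{\oP^1}(d))\otimes H^0(\oP^1,N_s(-d))$, its real form (cut out by $\tilde\sigma$) is a smooth manifold of real dimension $2(d+1)$, and by \cite{BiTAMS} it carries a natural $d$-hypercomplex structure in the sense of Definition \ref{d-hcx}, with $E$ the bundle of fibre $H^0(\oP^1,N_s(-d))\simeq\cx^2$ carrying the quaternionic structure induced by $\tilde\sigma$ (this is where oddness of $d$ enters, the antipodal map acting antilinearly with square $(-1)^d=-1$ on $H^0(\oP^1,\sO_{\oP^1}(d))$) and $\cx^{d+1}\simeq H^0(\oP^1,\sO_{\oP^1}(d))$ carrying the standard irreducible $SL(2,\cx)$-structure. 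Restricting to $M_\phi\cap\sU$ and taking the finite union over the components of $\rho^{-1}(B)$ meeting $\sU$ then equips $\rho^{-1}(B)\cap\sU$ with a $d$-hypercomplex structure; since the construction uses no choices beyond $\phi$, which only labels the components, the structure is intrinsic to the fibres of $\rho|_\sU$.

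The step I expect to be the main obstacle is the invocation of \cite{BiTAMS}: one must check that the construction there of a $d$-hypercomplex structure on a space of sections of generic splitting type needs only a holomorphic submersion $Z\to\oP^1$ together with an antiholomorphic involution covering the antipodal map, rather than the remaining twistor axioms, and that the quaternionic structure on $E$ and the involutivity condition of Definition \ref{d-hcx} come out exactly as claimed for $p:\phi^\ast Z\to\oP^1$. Everything else is routine: the smoothness and dimension of the section space from Kodaira's theorem, the openness of $\sU$ from upper semicontinuity of the splitting type, the identification $N_s\simeq {T_F}|_C$ (which also reproves, via \eqref{NtoF}, that the curves in $\sU$ lie in $X_{d,0}^{(0)}$), the bijection between embedded $C$ with $\pi|_C=\phi$ and the relevant sections via \eqref{fibred}, and the reality bookkeeping for $d$ odd.
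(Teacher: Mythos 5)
Your proposal is correct and follows essentially the same route as the paper: identify each connected component of a fibre of $\rho$ (labelled by a rational map $\phi$ commuting with the antipodal map) with an open subset of the $\sigma$-invariant Kodaira moduli space of sections of $\phi^\ast Z\to\oP^1$, observe that the normal bundle of such a section is ${T_F}|_C\simeq \sO_{\oP^1}(d)\oplus\sO_{\oP^1}(d)$, and invoke \cite{BiTAMS} for the $d$-hypercomplex structure on spaces of sections with normal bundle $\bigoplus\sO(d)$. You supply more of the reality and normal-bundle bookkeeping than the paper, which states these steps without elaboration, but the underlying argument is identical.
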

\begin{proof} Let $M_\phi$ be a connected component of a fibre of $\rho$  determined by a rational map $\phi:\oP^1\to \oP^1$ of degree $d$, commuting with the antipodal map. The arguments of the previous section imply that $M_\phi$ is an open subset of the $\sigma$-invariant part of the Kodaira moduli space of sections of $\phi^\ast Z$. As observed in the previous section, the normal bundle of such a section corresponding to $C$ in the open subset of the statement is isomorphic to ${T_F}|_C\simeq \sO_{\oP^1}(d)\oplus\sO_{\oP^1}(d)$. 
\end{proof}
 
\subsection{Signature of the metric}

Suppose now that $Z$ is also equipped with an $\sO(2)$-valued symplectic $2$-form along fibres of $\pi$, which is compatible with the real structure. Then the manifold $M_{d,0}^{(2)}$ has a natural pseudo-hyperk\"ahler metric $g$. We shall now use the description of the induced symplectic form $\omega^{[d]}$ on the bundle of $E$ over $M_{d,0}^{(2)}$, given in \S\ref{Douady}, to determine the signature of the metric. A real tangent vector in $TM_{d,0}^{(2)}\simeq E\otimes \cx^2$ can be written as $x=(e,-je)$, where  $j$ is the quaternionic structure of $E$, and the metric is then  (cf., e.g., \cite[(3.103)]{HKLR}) 
\begin{equation}g(x,x)=-2\omega^{[d]}(e,je).\label{metric}\end{equation} 
\par
Let now $C\in M_{d,0}^{(2)}$. We fix a parametrisation $r:\oP^1\to C$ such that $\sigma\circ r=r\circ \sigma$ (where $\sigma:\oP^1\to\oP^1$ denotes the antipodal map) and consider its composition with $\pi|_C$. This  is a degree $d$ rational map, commuting with the antipodal map. Without loss of generality we can assume that neither $0$ nor $\infty$ is mapped to $\infty$. We can then write this rational map as  $p(t)/q(t)$, where $p$ and $q$ are relatively prime degree $d$ polynomials in the affine coordinate $t$ on $\oP^1$.
\par
Since $C\in M_{d,0}^{(2)}$, its normal bundle $N$ is isomorphic to $\sO_{\oP^1}(2d-1)\oplus \sO_{\oP^1}(2d-1)$. Then $N(-1)\simeq\sO_{\oP^1}(d-1)\oplus\sO_{\oP^1}(d-1)$, and the isomorphism 
$$E_C\otimes \cx^2\simeq  H^0(C,N(-1))\otimes \cx^2\longrightarrow H^0(C,N)\simeq T_C  M_{d,0}^{(2)}$$
can be written as
$$ H^0(\oP^1,\sO(d-1)\oplus\sO(d-1))\otimes\cx^2 \ni \bigl((f_1,g_1),(f_2,g_2)\bigr)\longmapsto (pf_1+qg_1,pf_2 +qg_2).$$
We can also assume that the quaternionic structure of $E_C$ is the standard one on $ H^0(\oP^1,\sO(d-1)\oplus\sO(d-1))$, i.e.
$$ j\bigl(f(t),g(t)\bigr)=t^{d-1}\Bigl(-\ol{g(-1/\bar t\,)},\ol{f(-1/\bar t\,)}\Bigr).$$
We now unravel the description of $\omega^{[d]}$, given in the proof of Proposition \ref{other}.
Let $(f_i,g_i)\in H^0(C,N(-1))$, $i=1,2$, be two sections, consisting of pairs of polynomials of degree $d-1$. Then $f_1g_2-g_1f_2\in H^0\bigl(C,(\Lambda^2N)(-2)\bigr)$, which we view (using $\omega\in H^0(C,K_Z(4)|_C)$) as a section of $H^0(C,K_C(2))$. The corresponding meromorphic $1$-form is $(f_1g_2-g_1f_2)(q^2(t))^{-1}dt$, and it has  poles bounded by $2(q(t))$. The  extension class in $H^1(\oP^1,\sO_{\oP^1}(-2))$ can be viewed as the Laurent tail $\zeta^{-1}\cdot \infty$, and its pullback is then 
$$\sum_{i=1}^d \bigl( \text{linear term of $q(t)/p(t)$ at $t=t_i$} \bigr)\cdot t_i,$$
where $t_i$ are the zeros of $q$ (since $\zeta=p(t)/q(t)$). The pairing of $H^0(C,K_C(2))$ and $H^1(C,\sO(-2))$ is given by the residue map
$$\sum_{i=1}^d \Res_{t=t_i} \frac{q(t)}{p(t)}(f_1g_2-g_1f_2)(q^2(t))^{-1}dt=\sum_{i=1}^d \Res_{t=t_i} \frac{f_1g_2-g_1f_2}{p(t)q(t)}dt.$$
Let us write, for a polynomial of degree $k$,
$$ \tau (f)(t)=(-t)^k\ol{f\bigl(-1/\bar t\bigr)}.$$
The square of this map is ${\rm Id}$ if $k$ is even, and $-{\rm Id}$ if $k$ is odd. The fact that $p/q$ commutes with the antipodal map means that $p=-\tau(q)$.
\par
Let $\gamma$ be a simple contour in $\cx$ separating the roots from $q$ from the roots of $\tau(q)=-p$. It follows from the above and from \eqref{metric} that the metric on $T_C M_{d,0}^{(2)}$ is equal to
\begin{equation} \|x\|^2=\bigl\|\bigl((f,g),-j(f,g)\bigr)\bigr\|^2=\frac{1}{\pi i}\oint_\gamma \frac{f\tau(f)+g\tau(g)}{q\tau(q)}dt.\label{hermitian}\end{equation}
We want to determine the signature of the right-hand side on pairs $(f,g)$ of polynomials of degree $d-1$.  Owing to the continuity, it is enough to compute the signature for one particular $q$, say $q(t)=t^d$. Then $\tau(q)=1$ (since $d$ is odd) and the right-hand side is the middle degree term of $2f\tau(f)+2g\tau(g)$, i.e.:
$$ 2\sum_{i=0}^{d-1}(-1)^{d-1-i}|f_i|^2+2\sum_{i=0}^{d-1}(-1)^{d-1-i}|g_i|^2.$$
Therefore the  signature of the metric $g$ is $(2d+2,2d-2)$.

\section{Example: gravitational instantons of type $A_k$}
Consider an ALE or an ALF gravitational instanton $M$ of type $A_k$. We recall, after Hitchin \cite{Hit-pol}, its construction using twistor methods. The twistor space of $M$ has a singular model given as a hypersurface $\bar Z$ in the total space of a vector bundle over $T\oP^1$. If $\zeta$ is the affine coordinate on $\oP^1$ and $\eta$ is the corresponding fibre coordinate on $T\oP^1$, we denote by $L^c$, $c\in \cx$, the line bundle on $T\oP^1$ with transition function $\exp(-c\eta/\zeta)$ from $\zeta\neq\infty$ to $\zeta\neq 0$. Then $\bar Z$ is given by
\begin{equation*} \{(x,y,z)\in L^c(k)\oplus L^{-c}(k)\oplus \sO_{\oP^1}(2)\,;\, xy=\prod_{i=1}^k(z-a_i(\zeta))\}\label{hypers}\end{equation*}
where $c$ is real and $a_i$ are quadratic polynomials satisfying reality conditions. $M$ is then the space of real sections of $\pi:\bar Z\to \oP^1$ obtained by choosing an arbitrary real section $z(\zeta)=(x_2+ix_3)+2x_1\zeta-(x_2-ix_3)\zeta^2$ of $\sO_{\oP^1}(2)$,  and dividing the set of all zeros of $z(\zeta)-a_i(\zeta)$, $i=1,\dots,k$, into two subsets $\Delta_1$, $\Delta_2$, interchanged by the antipodal map. This can be done consistently as shown in \cite{Hit-pol}. The sections of $\pi:\bar Z\to \oP^1$ are then
\begin{equation} x(\zeta)=Ae^{c(x_1-(x_2-ix_3)\zeta)}\prod_{\zeta_i\in\Delta_1}(\zeta-\zeta_i),\quad y(\zeta)=Be^{-
c(x_1-(x_2-ix_3)\zeta)
}\prod_{\zeta_i\in\Delta_2}(\zeta-\zeta_i),
\label{xy}\end{equation}
over $\zeta\neq \infty$.
The nonzero scalars $A,B$ are determined up to a circle action, which yields an isometric $S^1$-action on $M$.
We remark that resolving the singularities of $\bar Z$ is not necessary for computing $M$ and its metric.
\par
We now discuss the geometry of real (i.e.\ $\sigma$-invariant) $\oP^1$'s of degree $d$, $d$ - odd, in $\bar Z$. 
Let $\phi=p(t)/q(t)$ be a  rational map of degree $d$, commuting with the antipodal map, and assume for simplicity that $t=\infty$ is not a pole of $\phi$. 
The function $\phi$ can be viewed as the transition function for the bundle $\sO_{\oP^1}(d)$ from $U_0=\{q\neq 0\}$ to $U_1=\{p\neq 0\}$. A section of $\sO_{\oP^1}(kd)$ is then represented by $b/q^k$ on $U_0$ and $b/p^k$ on $U_1$, where $b$ is a polynomial of degree $kd$. Let $z=b/q^2$ be a section of $\sO_{\oP^1}(2d)$ and write $b=b_0p+b_1q$. We get a section of the line bundle $L_\phi^c$ with transition function $\exp(cz/\phi)$  by setting
$$(s_0,s_1)=\bigl(\exp(-cb_0/q),\exp(cb_1/p)\bigr)$$
in $U_0$ and $U_1$ respectively. If we now consider the fibred product $\phi^\ast \bar Z$, as in \S\ref{map-rho}, then its sections, and hence the fibre of the map $\rho: M_{d,0}^{(2)}\to \oR\oP^{2d-2}$, are obtained in the same way as for $d=1$: choose an arbitrary real section $z(t)=b(t)/q(t)^2$ of $\sO_{\oP^1}(2d)$, divide the zeros of all $z(t)-a_i(\phi(t))$ into two sets, and obtain $x(t), y(t)$ as in \eqref{xy}, replacing the exponential factors by $\bigl(\exp(-cb_0/q)$ over $q(t)\neq 0$ and by $\exp(cb_1/p)$ over $p(t)\neq 0$. The space of real sections of $\phi^\ast\bar Z$ with normal bundle $\sO(d)\oplus\sO(d)$ is nonempty owing to Remark \ref{X1}, and is a $d$-hypercomplex analogue of the original gravitational instanton, as introduced in \cite[\S 3.1.2]{DM2}. 
\par
A generic section $s$ of $\phi^\ast\bar Z$ will yield an embedded $\oP^1$ in $\bar Z$, and hence,
by varying $\phi$, we obtain a $4d$-dimensional space of embedded real  $\oP^1$'s of degree $d$ in $\bar Z$. 
\par
We claim that, for  generic $c$ and $a_i$,  $i=1,\dots,k$, the normal bundle of a generic such curve is $\sO(2d-1)\oplus \sO(2d-1)$, i.e.\ $M_{d,0}^{(2)}$ is nonempty (and hence of dimension $4d$). Indeed, were this not the case, the normal bundle of every degree $d$ rational curve (flat over $\oP^1$) in the twistor space of $\bigl(\oR^4\backslash \{0\}\bigr)/\oZ_k$ would also be different from $\sO(2d-1)\oplus \sO(2d-1)$. This twistor space $Z_0$ is the quotient by $\oZ_k$ of the total space $W$ of $\sO(1)\oplus\sO(1)$ with the zero section removed. In particular a generic degree $d$ rational curve in $W$ descends to a rational curve of degree $d$ curve in $Z_0$ with isomorphic normal bundle. Since $W$ is an open subset of $\oP^3$, a generic degree $d$ $\oP^1$ in $W$ has normal bundle isomorphic to  $\sO(2d-1)\oplus \sO(2d-1)$ \cite{GS}. This contradiction proves our claim.
\par
We can say more in the case  $c=0$, i.e.\  when $Z$ is the twistor space of an ALE manifold.
The fibred product  $\phi^\ast \bar Z$ is then a hypersurface in the total space of the vector bundle $E_d=\sO(kd)\oplus\sO(kd)\oplus\sO(2d)$.
If $s$ is a section of $\phi^\ast \bar Z$,  given by homogeneous polynomials $x(u,v),y(u,v),z(u,v)$ of degrees $kd,kd,$ and $2d$, then its normal bundle fits into a short exact sequence
\begin{equation} 0\longrightarrow N_{s/\phi^\ast\bar Z} \stackrel{j}{\longrightarrow} E_d\longrightarrow \sO(2kd) \longrightarrow 0,\label{NCphi}\end{equation}
since $N_{s/E_d}\simeq E_d$. The projection $ E_d\longrightarrow \sO(2kd)$ is given by $$\Bigl[y,x,-\sum_i\prod_{j\neq i}(z-a_j)\Bigr]^T,$$
from which one can compute $N_{s/\phi^\ast\bar Z} $.

The original twistor space $\bar Z$ is a hypersurface in the total space of the bundle $E_1$. We can view the curve $C$ given by $\phi $ and $s$ as being embedded in $E_1$. Its normal bundle fits then into a short exact sequence (cf.\  Ex.\ \ref{P3}):
\begin{equation} 0\longrightarrow \sO(1)\oplus\sO(1) \stackrel{\Psi}{ \longrightarrow} \sO(kd)^{\oplus 2}\oplus\sO(2d)\oplus\sO(d)^{\oplus 2} \longrightarrow N_{C/E_1} \longrightarrow 0,\label{NE1}\end{equation}
\nopagebreak[9]
where $\Psi$ is the Jacobi matrix of $[x(u,v),y(u,v),z(u,v),p(u,v),q(u,v)]^T$. We can extend \eqref{NCphi} and \eqref{NE1} to a commutative diagram:
$$
\begin{tikzcd} & & 0 \ar[d]& 0\ar[d] &\\[-6pt] & & N_{s/\phi^\ast\bar Z}\oplus\sO(d)^{\oplus 2}\ar[r,"\nu"] \ar[d, "j\oplus {\rm Id}"] & N_{C/\bar Z}\ar{r}\ar{d} & 0\\
0 \ar[r] & \sO(1)\oplus\sO(1)\ar[r, "\Psi"] \ar[ru, "\lambda", dashrightarrow] & E_d\oplus\sO(d)^{\oplus 2}\ar[r] \ar[d] &N_{C/E_1} \ar[r]\ar[d] & 0\\ & & \sO(2kd)\ar[d]\ar[r, equal] &\sO(2kd)\ar[d] &\\[-6pt] & & 0 & 0 &
\end{tikzcd}
$$
It is now the matter of (a complicated) linear algebra to compute the map $\lambda$ from $\Psi$ and from the vertical projection $E_d\to \sO(2kd)$. Assuming that $ N_{s/\phi^\ast\bar Z}\simeq \sO(d)\oplus\sO(d)$, $\lambda$ will be a $4\times 2$ matrix of degree $d-1$ polynomials in $u,v$, the coefficients of which depend on the polynomials $x,y,z,p,q$. The map $\nu$ can then be computed from $\lambda$, and the condition that $N_{C/\bar Z}\simeq \sO(2d-1)\oplus\sO(2d-1)$ can be written as a determinant of a matrix given by coefficients of $\lambda$. The polynomials $x$ and $y$ depend (up to scale) algebraically on $z,p,q$ and, hence, $X_{d,0}^{(2)}$, and its real part $M_{d,0}^{(2)}$, are described by this algebraic relation between  the coefficients of arbitrary polynomials $z,p,q$ of degrees $2d,d,d$. 
\begin{remark}
In principle, once the maps $\lambda$ and $\nu$ are determined, the pseudo-hyperk\"ahler metric on $M_{d,0}^{(2)}$ can be computed using the method of the previous section.
\end{remark}
\begin{remark} We should like to point out that in the ALE case, the singular model $\bar Z$ compactifies to a hypersurface in the weighted projective space $\oP=\oP(1,1,2,k,k)$. For more details on this compactification see \cite{Kron}. Since the degree of the hypersurface is $2k$, the adjunction formula implies that its canonical sheaf is isomorphic to $\sO_{\oP}(2k-1-1-2-k-k)\simeq \sO_{\oP}(-4)$, and hence the compactification of $\bar Z $ is Fano.
\label{Fano}\end{remark}

\end{document}